\begin{document}
\title{Bi-forms Approach to Potential Functions in Information Geometry}
%
%
\author{Florio M. Ciaglia\inst{1}
\and
Giuseppe Marmo\inst{2,3}
\and
Marco Pacelli\inst{3,4}\thanks{Corresponding author: marco.pacelli-ssm@unina.it}
\and 
Luca Schiavone\inst{3,5}
\and 
Alessandro Zampini\inst{3,4,5}
}
\authorrunning{F. M. Ciaglia et al.}
%
\institute{Department of Mathematics, University Carlos III de Madrid, Leganés, Madrid, Spain \\ \email{ fciaglia@math.uc3m.es}
\\ \and
Dipartimento di Fisica “E. Pancini”, Università di Napoli Federico II, Naples, Italy\\ \and 
INFN-Sezione di Napoli, Naples, Italy\\
\email{giuseppe.marmo@na.infn.it}
\\ \and 
Scuola Superiore Meridionale, Naples, Italy\\
\email{marco.pacelli-ssm@unina.it}
\and
Dipartimento di Matematica e Applicazioni “Renato Caccioppoli”, Università di Napoli Federico II, Naples, Italy\\
\email{luca.schiavone@unina.it,alessandro.zampini@unina.it}
}
\maketitle              
\begin{abstract}
Contrast functions play a fundamental role in information geometry, providing a means for generating the geometric structures of a statistical manifold: a pseudo-Riemannian metric and a pair of torsion-free conjugate affine connections. Conventional contrast-based approaches become indeed insufficient within settings where torsion is naturally present, such as quantum information geometry. This paper introduces contrast bi-forms, a generalisation of contrast functions that systematically encode metric and connection data, allowing for arbitrary affine connections regardless of torsion. It will be shown that they provide a unified framework for statistical potentials, offering new insights into the inverse problem in information geometry. As an example, we consider teleparallel manifolds, where torsion is intrinsic to the geometry, and show how bi-forms naturally accommodate these structures.

\keywords{Contrast Function \and Torsion \and Bi-form \and Teleparallel Manifold.}
\end{abstract}
\section{Introduction}

Contrast (or divergence) functions on a smooth manifold play a fundamental role in both mathematical theory and applications, providing a measure of distinguishability between probability distributions \cite{ref_Egu}. A crucial feature of contrast functions is that they naturally induce a pseudo-Riemannian metric $g$ and a pair of $g$-conjugate and torsion-free affine connections $(\nabla,\nabla^\dag)$ on the underlying manifold $M$ \cite{ref_Egu}. The resulting structure defines a statistical manifold in the sense of Lauritzen, capturing the essential mathematical features of parametric statistical models \cite{ref_Lau}. Within such paradigmatic examples, the metric $g$ coincides with the Fisher-Rao metric, while the torsion-free affine connections $\nabla$ and $\nabla^\dag$ correspond to the $(\pm 1)$-connections. 
While this framework is well suited to many applications, there are cases where the torsion-free condition does not naturally hold. A key example arises from quantum finite level systems, where the geometry of the space of faithful quantum states suggests that torsion is an inherent feature rather than an anomaly \cite{ref_CdC,ref_J}. This motivates the introduction of a broader class of statistical structures. A statistical manifold admitting torsion (or, simply, SMAT) is a triple $(M, g, \nabla)$, where $\nabla$ is required to be torsion-free, whereas $\nabla^\dag$ may exhibit torsion. Such structures have been explored in connection with estimation theory \cite{ref_HM_2017} and in attempts to formulate a geometric framework for quantum information theory \cite{ref_Kurose}. 
Extending the idea of SMATs, in this paper we introduce what we call Lauritzen manifolds, that is, triples $(M, g, \nabla)$ in which both $\nabla$ and $\nabla^\dag$ may possess torsion. A natural question then arises: can these structures still be derived from a potential? As anticipated above, contrast functions are not suitable candidates, since Schwarz's theorem forces any connection derived from a contrast function to be torsion-free. This limitation has been the starting point to explore alternative formulations for potentials. We limit ourselves here to recall that  Henmi and Matsuzoe \cite{ref_HM} introduced pre-contrast functions to generate SMATs, while Zhang and Khan \cite{ref_ZK} proposed super-contrast functions as potentials for Lauritzen manifolds, both extending the standard theory of contrast functions in order to accommodate torsion.
In this paper, we propose an alternative method for generating Lauritzen manifolds. Instead of relying on super-contrast functions, we adopt the formalism of bi-forms \cite{ref_EB_I,ref_EB_II,ref_NSS,ref_Ruse} and introduce the notion of a contrast bi-form as a suitable potential. This approach, in our opinion, offers two key advantages. First, it provides a unified framework for describing potentials, since contrast, pre-contrast, and super-contrast functions come as specific instances of bi-forms. Second, it offers a natural framework to address what we call the inverse problem in information geometry: given a statistical or Lauritzen manifold, determining whether a potential can generate it. While this problem is well understood in the case of statistical manifolds \cite{ref_AA,ref_Matu}, it remains largely unexplored for SMATs and Lauritzen manifolds. Upon using the general framework of bi-forms, and developping from \cite{ref_HM,ref_ZK}, we aim to establish a solid foundation for a systematic theory of potentials in information geometry. This provides a unified approach for  both classical and quantum settings.
Due to the space limitations of the present issue, we will describe the details and present full proofs of the statements and results discussed here in a forthcoming publication.
\section{Contrast bi-forms}  
The theory of contrast functions is based on the geometry of the Cartesian square of a manifold. Given a manifold $M$, its Cartesian square is the product manifold $M \times M$ equipped with the canonical projections $\pi_L\colon M\times M\to M$ and $\pi_R\colon M\times M\to M$, which extract the left and right components, respectively:
\begin{align}
    \pi_L(m,n)&=m \qquad \textup{ and } \qquad \pi_R(m,n)=n\,.
\end{align}
This geometric framework was first used by Eguchi \cite{ref_Egu} to geometrically describe an algorithm that associates a pseudo-Riemannian metric $g$ and a pair of $g$-conjugate, torsion-free affine connections to a suitable class of \textit{2-point functions} on $M$, i.e., smooth real-valued functions defined on the product manifold $M \times M$ known as contrast functions. In the terminology commonly used in information geometry, these functions are also referred to as divergence functions or potential functions. While these terms may have slightly different technical meanings depending on the context or the class of functions considered, they all share the property of inducing the geometric structures $(g,\nabla)$ on $M$.

In this section, we introduce a more general formulation of these geometric objects using the formalism of bi-forms.

\subsection{Bi-forms}  

A bi-form on a manifold $M$ is a section of a tensor bundle over its Cartesian square $M \times M$. More precisely, given $(p,q) \in \mathbb{N}_0 \times \mathbb{N}_0$, a $(p,q)$-bi-form on $M$ is a smooth section of the vector bundle:  
\begin{equation}
\bigwedge^p\pi_L^\ast T^\vee M\otimes_{M\times M} \bigwedge^q\pi_R^\ast T^\vee M\,,
\end{equation} 
where $\tau_M^\vee\colon T^\vee M\to M$ is the cotangent bundle projection of $M$. The space of $(p,q)$-bi-forms is denoted by $\Omega^{p,q}(M\mid M)$.  

 A $(p,q)$-bi-form $\varpi$ can be interpreted as a $(p+q)$-multilinear real-valued fiber-wise function on $(TM)^{p+q}$ that is skew-symmetric in the first $p$ and in the last $q$ entries. This leads to a pairing between bi-forms on $M$ and vector fields on $M$. Given a $(p,q)$-bi-form $\varpi$ and vector fields $\{X_i\}_{i=1}^{p}$ and $\{Y_j\}_{j=1}^q$ on $M$, we define:
\begin{multline}
\varpi(X_1,\dots,X_p\mid Y_1,\dots,Y_q)(m,n) \\= \varpi(X_1(m),\dots,X_p(m)\mid Y_1(n),\dots,Y_q(n))\,.
\end{multline}
Furthermore, this pairing provides a systematic method for generating block-wise alternating covariant tensors on $M$, i.e., sections of the tensor bundle over $M$ whose total manifold is:\begin{equation}
\bigwedge^pT^\vee M\otimes_M \bigwedge^qT^\vee M\,.
\end{equation}
Both pseudo-Riemannian metrics and the torsion tensors of affine connections -- when interpreted as $3$-covariant tensors via the musical isomorphism -- are  of this type. These can be systematically obtained from bi-forms via the diagonal embedding $\iota\colon M\to M\times M$, defined by:
\begin{equation}
\iota(m)=(m,m)\,.
\end{equation} 
Indeed, this map induces the operator:
\begin{equation}
\iota^\ast \colon \Omega^{p,q}(M\mid M)\to \Omega^p(M)\otimes_{C^\infty(M)}\Omega^q(M)\,,
\end{equation} 
where the tensor product is taken over the ring of smooth functions on $M$. This operator assigns to any $(p,q)$-bi-form $\varpi$ on $M$ the block-wise alternating tensor:  
\begin{equation}\label{Eq: iota ast}
(\iota^\ast \varpi)(Z_1,\dots,Z_{p+q})=\iota^\ast\big( \varpi(Z_1,\dots,Z_p\mid Z_{p+1},\dots,Z_{p+q})\big)\,,
\end{equation} 
where $\{Z_i\}_{i=1}^{p+q}$ are vector fields on $M$. Since the pairing is local, the operator $\iota^\ast$ naturally restricts to the space of $(p,q)$-bi-forms that are defined on an open neighborhood of the diagonal submanifold $\Delta_M$ of $M \times M$. We denote this space by $\Omega^{p,q}_{\Delta_M}(M\mid M)$.

Another useful operator on bi-forms arises in terms of a distinguished map associated to the Cartesian square of a manifold, namely the swap map $s\colon M\times M\to M\times M$ defined by:
\begin{equation}
s(m,n) = (n,m)\,.
\end{equation}
This map interchanges the roles of the left and right projections, providing a systematic way to mirror properties between left and right  components. Since $s$ induces a fiber bundle isomorphism between $\pi_L$ and $\pi_R$, it induces the operator:
\begin{equation}
    s^\ast\colon \Omega^{p,q}(M\mid M)\to \Omega^{q,p}(M\mid M)\,,
\end{equation}
which acts on a $(p,q)$-bi-form $\varpi$ on $M$ as:
\begin{equation}
    (s^\ast\varpi)(X_1,\dots,X_q\mid Y_1,\dots,Y_p) = s^\ast\big(\varpi(Y_1,\dots,Y_p\mid X_1,\dots,X_q)\big)\,,
\end{equation}
where $\{X_i\}_{i=1}^q$ and $\{Y_j\}_{j=1}^p$ are vector fields on $M$.  
Since the pairing is local and $s$ fixes the diagonal submanifold $\Delta_M$ of $M\times M$, also the operator $s^\ast$ restricts to $s^\ast\colon \Omega^{p,q}_{\Delta_M}(M\mid M)\to \Omega^{q,p}_{\Delta_M}(M\mid M)$.

Finally, we discuss a structure on the space of bi-forms that has an interesting interpretation from the statistical point of view, that is the commutative bi-complex of bi-forms. The vertical distribution induced by  $\pi_R$ defines a flat generalized Ehresmann connection on the fiber bundle $\pi_L$, inducing a decomposition of the tangent bundle of $M \times M$ as the Whitney sum of the vertical distributions corresponding to $\pi_L$ and $\pi_R$. This decomposition naturally gives rise to a pair of differential operators $\mathrm{d}^L$ and $\mathrm{d}^R$ acting on bi-forms, that we define as follows.

The left differential operator $\mathrm{d}^L$ increases the left degree and is given by:
\begin{align}
    \mathrm{d}^L\colon \Omega^{p,q}(M\mid M)\to \Omega^{p+1,q}(M\mid M)\,,
\end{align}
with the explicit action:
\begin{multline}\label{Eq: global formula dl}
\mathrm{d}^{L}\varpi(X_0,X_1,\dots,X_p\mid Y_1,\dots,Y_q)\\
=\sum_{i=0}^p(-1)^i\mathcal{L}_{X_i^{L}}\big(\varpi(X_0,\dots, \Check{X}_i,\dots,X_p\mid Y_1,\dots,Y_q)\big)\\
\quad +\sum_{0\le a<b\le p}(-1)^{a+b}\varpi([X_a,X_b],X_0,\dots, \Check{X}_a,\dots, \Check{X}_b,\dots,X_p\mid Y_1,\dots,Y_q)\,,
\end{multline}
where $\varpi$ is a $(p,q)$-bi-form on $M$, the elements $\{X_i\}_{i=0}^p, \{Y_j\}_{j=1}^q$ are vector fields on $M$, and $X^L$ denotes the $\pi_L$-horizontal lift of the vector field $X$. The check notation indicates omission of the corresponding vector field. It is possible to prove that $\mathrm{d}^L$ satisfies the cohomology condition $\mathrm{d}^L\circ \mathrm{d}^L=0$,
which means that bi-forms can be studied within a differential complex.

We introduce the right differential operator $\mathrm{d}^R$, which increases the right degree, and is given by:
\begin{equation}
    \mathrm d^R\colon \Omega^{p,q}(M\mid M)\to \Omega^{p,q+1}(M\mid M)\,.
\end{equation}
Instead of writing its full expression explicitly, which would be analogous to \eqref{Eq: global formula dl}, we use the swap map $s^\ast$ to define it as:
\begin{equation}
    \mathrm{d}^R\varpi = s^\ast\big(\mathrm{d}^L(s^\ast\varpi)\big)\,,
\end{equation}
where $\varpi$ is a $(p,q)$-bi-form on $M$.
As for the case of the  $\mathrm{d}^L$ operator, the operator $\mathrm{d}^R$ satisfies the cohomology property $\mathrm{d}^R\circ \mathrm{d}^R=0$. Furthermore,  these two operators commute, i.e. $\mathrm{d}^L \circ \mathrm{d}^R = \mathrm{d}^R \circ \mathrm{d}^L$. Thus, the space of bi-forms is endowed with the structure of a commutative bi-complex.

\subsection{Contrast Bi-Forms}  

Contrast functions, as characterized by Eguchi, are $2$-point functions inducing pseudo-Riemannian metrics. Similarly, $2$-covariant tensors on a manifold $M$ arise from  $(2,0)$, $(1,1)$, $(0,2)$-bi-forms. Since pseudo-Riemannian metrics are symmetric, the only suitable choice for encoding such a structure is a $(1,1)$-bi-form.

A contrast bi-form is an element $\varpi\in \Omega_{\Delta_M}^{1,1}(M\mid M)$ such that $\iota^\ast \varpi$ defines a pseudo-Riemannian metric, denoted by $g^\varpi = \iota^\ast \varpi$ (cf. \eqref{Eq: iota ast}). The associated affine connection $\nabla^\varpi$ is defined implicitly by:
\begin{equation}\label{Eq: induced connection}
    g^\varpi\big(\nabla^\varpi_ZX,Y\big) = \iota^\ast\Big(\mathcal L_{Z^L}\big(\varpi(X\mid Y)\big)\Big)\,,
\end{equation}
where $X,Y,Z$ are vector fields on $M$ and $Z^L$ denotes the $\pi_L$-horizontal lift of $Z$. The dual affine connection is induced by $s^\ast \varpi$, and we denote it as $\varpi^\dag = s^\ast\varpi$.

\begin{remark}
    Viewing $\varpi$ as a fiber-wise bilinear function $TM\times TM\to \mathbb{R}$ is coherent with the definitions of super-contrast functions of Zhang and Khan \cite{ref_ZK}.
\end{remark}

The commutative bi-complex structure provides a characterization of torsion properties of the induced connections. 

\begin{proposition}
Let $\varpi$ be a contrast bi-form on a manifold $M$. The connection $\nabla^\varpi$ is torsion-free if and only if $\iota^\ast \mathrm{d}^L\varpi = 0$, while the dual connection $\nabla^{\varpi^\dag}$ is torsion-free if and only if $\iota^\ast \mathrm{d}^R\varpi = 0$.
\end{proposition}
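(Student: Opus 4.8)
The plan is to produce an explicit identity expressing the torsion of $\nabla^\varpi$, contracted with the metric, in terms of $\iota^\ast\mathrm d^L\varpi$, and then to use the non-degeneracy of $g^\varpi$. First I would pair the torsion $T^\varpi(X,Y)=\nabla^\varpi_XY-\nabla^\varpi_YX-[X,Y]$ with an arbitrary vector field $W$ by means of \eqref{Eq: induced connection}; since $g^\varpi=\iota^\ast\varpi$ the bracket term reads $g^\varpi([X,Y],W)=\iota^\ast\big(\varpi([X,Y]\mid W)\big)$, whence
\begin{multline*}
g^\varpi\big(T^\varpi(X,Y),W\big)\\=\iota^\ast\Big(\mathcal L_{X^L}\big(\varpi(Y\mid W)\big)-\mathcal L_{Y^L}\big(\varpi(X\mid W)\big)-\varpi([X,Y]\mid W)\Big)\,.
\end{multline*}
Specializing the defining formula \eqref{Eq: global formula dl} of $\mathrm d^L$ to $p=q=1$ gives precisely
\begin{equation*}
\mathrm d^L\varpi(X,Y\mid W)=\mathcal L_{X^L}\big(\varpi(Y\mid W)\big)-\mathcal L_{Y^L}\big(\varpi(X\mid W)\big)-\varpi([X,Y]\mid W)\,,
\end{equation*}
so that comparison yields the key identity $g^\varpi\big(T^\varpi(X,Y),W\big)=(\iota^\ast\mathrm d^L\varpi)(X,Y,W)$, with the right-hand side read as the block-wise alternating tensor of \eqref{Eq: iota ast}. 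Since $g^\varpi$ is non-degenerate, $T^\varpi$ vanishes identically if and only if the left-hand side vanishes for all $W$, i.e. if and only if $\iota^\ast\mathrm d^L\varpi=0$; this proves the first claim.

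For the dual connection I would run the same argument on $\varpi^\dag=s^\ast\varpi$ after recording two facts. Because the swap map fixes the diagonal one has $s\circ\iota=\iota$, hence $\iota^\ast s^\ast f=\iota^\ast f$ for every function $f$ on $M\times M$; together with the symmetry of the metric this gives $g^{\varpi^\dag}=g^\varpi$, so both connections are conjugate with respect to the same metric and the first claim applies verbatim, reading $g^\varpi\big(T^{\varpi^\dag}(X,Y),W\big)=(\iota^\ast\mathrm d^L\varpi^\dag)(X,Y,W)$. Next, from the definition $\mathrm d^R\varpi=s^\ast(\mathrm d^L(s^\ast\varpi))$ and $s^\ast\circ s^\ast=\mathrm{id}$ (a consequence of $s\circ s=\mathrm{id}$) one obtains $\mathrm d^L\varpi^\dag=s^\ast\mathrm d^R\varpi$. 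Applying $\iota^\ast$ and using $\iota^\ast s^\ast f=\iota^\ast f$ once more identifies $\iota^\ast\mathrm d^L\varpi^\dag$ with $\iota^\ast\mathrm d^R\varpi$ up to a fixed permutation of arguments, so the two tensors vanish simultaneously, which is the second claim.

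The step I expect to demand the most care is the slot bookkeeping in this last identification. The operator $s^\ast\colon\Omega^{1,2}(M\mid M)\to\Omega^{2,1}(M\mid M)$ carries the single right-entry of $\mathrm d^R\varpi$ into the first left-slot of $\mathrm d^L\varpi^\dag$, so a careful evaluation gives $(\iota^\ast\mathrm d^L\varpi^\dag)(X,Y,W)=(\iota^\ast\mathrm d^R\varpi)(W,X,Y)$ rather than a naive term-by-term equality; being a fixed permutation of the arguments of a tensor it does not affect the equivalence between identical vanishing of the two sides, but it must be tracked to make the comparison literally correct. I would also check that the right-hand side of \eqref{Eq: induced connection} is $C^\infty(M)$-linear in $Z$ and satisfies the Leibniz rule in $X$, so that $\nabla^\varpi$ is indeed an affine connection; this is implicit in the definition of a contrast bi-form and underlies the entire computation.
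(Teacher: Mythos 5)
Your proof is correct and follows essentially the same route as the paper: specialize \eqref{Eq: global formula dl} to $p=q=1$, pull back along $\iota$, identify the result with $g^\varpi(\operatorname{Tor}^{\nabla^\varpi}(X,Y),W)$, and invoke non-degeneracy of $g^\varpi$. The paper dispatches the dual case with ``follows similarly by considering $\mathrm d^R\varpi$'', whereas you carefully spell out the identities $s\circ\iota=\iota$, $s^\ast\circ s^\ast=\mathrm{id}$, $g^{\varpi^\dag}=g^\varpi$ and the slot permutation $(\iota^\ast\mathrm d^L\varpi^\dag)(X,Y,W)=(\iota^\ast\mathrm d^R\varpi)(W,X,Y)$ --- all of which check out and make the omitted half of the argument explicit.
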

\begin{proof}
Recall the definition of the left differential of $\varpi$ (cf. \eqref{Eq: global formula dl}):
\begin{equation*}
(\mathrm d^L\varpi)(X_0,X_1\mid Y)=\mathcal L_{X_0^L}\big(\varpi(X_1\mid Y)\big)-\mathcal L_{X_1^L}\big(\varpi(X_0\mid Y)\big)-\varpi([X_0,X_1]\mid Y)\,,
\end{equation*}
where $X_0,X_1$ and $Y$ are arbitrary vector fields on $M$. If we pull such an expression back along the diagonal map $\iota$, and apply  the definition of the induced metric and connection, we obtain:
\begin{equation}
\iota^\ast\big((\mathrm d^L\varpi)(X_0,X_1\mid Y)\big) = g^\varpi\big(\nabla^\varpi_{X_0}X_1 - \nabla^\varpi_{X_1}X_0 - [X_0,X_1], Y\big)\,.
\end{equation}
That is:
\begin{equation}
(\iota^\ast \mathrm d^L\varpi)(X_0,X_1,Y)=g^\varpi\big(\operatorname{Tor}^{\nabla^\varpi}(X_0,X_1),Y\big)\,.
\end{equation}
Since $g^\varpi$ is non-degenerate, the vanishing of $\iota^\ast \mathrm d^L\varpi$ is equivalent to the vanishing of the torsion tensor $\operatorname{Tor}^{\nabla^\varpi}$. The statement for $\nabla^{\varpi^\dag}$ follows similarly by considering $\mathrm d^R\varpi$.
\end{proof}

The bi-complex structure also characterizes SMAT and statistical manifolds, as the following results (whose proves we are forced, as already mentioned in the introduction, to omit) show. 

\begin{theorem}
    Let $\varpi$ be a contrast bi-form on a manifold $M$. If $\varpi$ is left-exact, i.e. there is $S\in \Omega^{0,1}(M\mid M)$ such that $\varpi=\mathrm d^LS$, then $(M,g^\varpi,\nabla^\varpi)$ is a SMAT. Conversely, if the contrast bi-form $\varpi$ induces a SMAT structure on $M$, then there is $S\in \Omega^{0,1}(M\mid M)$ such that $\varpi$ and $\mathrm d^LS$ produce the same pseudo-Riemannian metric and affine connection. Moreover, $S$ can be chosen in such a way that $\iota^\ast S=0$.
\end{theorem}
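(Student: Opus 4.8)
The plan is to treat the two implications separately. The forward direction is immediate: if $\varpi=\mathrm d^LS$ for some $S\in\Omega^{0,1}(M\mid M)$, then the cohomology identity $\mathrm d^L\circ\mathrm d^L=0$ yields $\mathrm d^L\varpi=0$, hence $\iota^\ast\mathrm d^L\varpi=0$. By the preceding Proposition this says $\operatorname{Tor}^{\nabla^\varpi}=0$, so $\nabla^\varpi$ is torsion-free; since $g^\varpi=\iota^\ast\varpi$ is a pseudo-Riemannian metric by hypothesis, $(M,g^\varpi,\nabla^\varpi)$ is a SMAT.

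For the converse I would first record which jet of $\varpi$ the induced data actually depend on. In a chart with left/right coordinates $(x,y)$ on $U\times U$, writing $F_{ij}=\varpi(\partial_i\mid\partial_j)$ and recalling that the horizontal lift $\partial_k^L$ acts only in the $x$-variable, the metric and connection are governed along the diagonal by $F_{ij}(x,x)$ and by $\partial_{x^k}F_{ij}(x,y)|_{x=y}$. As $g^\varpi$ is non-degenerate, a second contrast bi-form induces the same $g^\varpi$ and $\nabla^\varpi$ exactly when these two quantities coincide. For $S\in\Omega^{0,1}(M\mid M)$ with components $S_j$ one has $(\mathrm d^LS)(X\mid Y)=\mathcal L_{X^L}(S(\mid Y))$, whose components are $\partial_{x^i}S_j$; so, imposing $\iota^\ast S=0$, the requirement that $\mathrm d^LS$ reproduce the data of $\varpi$ becomes, along the diagonal,
\begin{align*}
S_j(x,x)&=0,\\
\partial_{x^i}S_j(x,y)\big|_{x=y}&=F_{ij}(x,x),\\
\partial_{x^k}\partial_{x^i}S_j(x,y)\big|_{x=y}&=\partial_{x^k}F_{ij}(x,y)\big|_{x=y}.
\end{align*}
Thus the problem reduces to prescribing the left-$2$-jet of $S$ transverse to $\Delta_M$, with the first line encoding $\iota^\ast S=0$, the second $g^{\mathrm d^LS}=g^\varpi$, and the third $\nabla^{\mathrm d^LS}=\nabla^\varpi$.

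The step I expect to be the main obstacle is the third equation, whose left-hand side is symmetric in $(i,k)$ while its right-hand side is not, a priori. This is resolved precisely by the hypothesis: $(M,g^\varpi,\nabla^\varpi)$ being a SMAT forces $\nabla^\varpi$ torsion-free, so by the Proposition $\iota^\ast\mathrm d^L\varpi=0$, i.e. $\partial_{x^k}F_{ij}|_{x=y}=\partial_{x^i}F_{kj}|_{x=y}$, which is exactly the symmetry needed. I would then build $S$ locally from the Taylor polynomial
\[
S_j(x,y)=F_{ij}(y,y)\,(x-y)^i+\tfrac12\,\partial_{x^k}F_{ij}(x,y)\big|_{x=y}\,(x-y)^i(x-y)^k,
\]
any smooth extension with the same $2$-jet being equally good; the quadratic term contributes only its $(i,k)$-symmetric part to the Hessian, and torsion-freeness makes this part equal to $\partial_{x^k}F_{ij}|_{x=y}$, so all three conditions hold. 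To globalise I would glue the local solutions with a partition of unity $\{\rho_\alpha\}$ on $M$ pulled back along the right projection, setting $S=\sum_\alpha(\rho_\alpha\circ\pi_R)\,S_\alpha$. Since $\mathrm d^L$ and every $\mathcal L_{Z^L}$ differentiate only in the left variable, the factors $\rho_\alpha\circ\pi_R$ behave as constants and restrict to $\sum_\alpha\rho_\alpha=1$ on $\Delta_M$, so all three jet conditions survive and the resulting $S$ satisfies $\iota^\ast S=0$ with $\mathrm d^LS$ inducing $g^\varpi$ and $\nabla^\varpi$.
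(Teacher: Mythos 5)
The paper does not actually contain a proof of this theorem: the authors explicitly state that the proofs of both theorems are omitted and deferred to a forthcoming publication, so there is no argument of record to compare yours against. Judged on its own terms, your proof is correct. The forward implication via $\mathrm d^L\circ\mathrm d^L=0$ and the Proposition is immediate, as you say. For the converse, you correctly identify that the induced data depend only on $F_{ij}|_{\Delta_M}$ and $\partial_{x^k}F_{ij}|_{\Delta_M}$, and that the only obstruction to realising them from $\mathrm d^LS$ is that $\partial_{x^k}\partial_{x^i}S_j$ is forcibly symmetric in $(i,k)$; the identity $(\iota^\ast\mathrm d^L\varpi)(\partial_k,\partial_i,\partial_j)=\partial_{x^k}F_{ij}|_{\Delta_M}-\partial_{x^i}F_{kj}|_{\Delta_M}=g^\varpi\big(\operatorname{Tor}^{\nabla^\varpi}(\partial_k,\partial_i),\partial_j\big)$ shows the SMAT hypothesis supplies exactly the missing symmetry, and a direct check confirms your Taylor ansatz meets all three jet conditions. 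The gluing is also sound, because $\mathrm d^L$ and every $\mathcal L_{Z^L}$ annihilate functions pulled back along $\pi_R$ and $\sum_\alpha\rho_\alpha=1$ on $\Delta_M$. Two cosmetic points you should make explicit: (i) the glued $S$ is naturally defined only on a neighbourhood of the diagonal (one must check that the set of $(m,n)$ at which every term with $\rho_\alpha(n)\neq 0$ is defined is open and contains $\Delta_M$), so strictly you obtain an element of $\Omega^{0,1}_{\Delta_M}(M\mid M)$; since all operations involved are local at the diagonal this suffices, and a cut-off identically $1$ near $\Delta_M$ upgrades it to a global section if the statement is read literally. (ii) In your displayed formula the coefficient $\partial_{x^k}F_{ij}(x,y)\big|_{x=y}$ must be read as the diagonal restriction evaluated at $y$, which is what your verification of the Hessian condition implicitly assumes.
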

\begin{theorem}
    Let $\varpi$ be a contrast bi-form on a manifold $M$. If $\varpi$ is bi-exact, i.e. there is $F\in \Omega^{0,0}(M\mid M)$ such that $\varpi=\mathrm d^L\mathrm d^RF$, then $(M,g^\varpi,\nabla^\varpi)$ is a statistical manifold. Conversely, if the contrast bi-form $\varpi$ induces a statistical manifold structure on $M$, then there is $F\in \Omega^{0,0}(M\mid M)$ such that $\varpi$ and $\mathrm d^L\mathrm d^RF$ produce the same pseudo-Riemannian metric and affine connection. Moreover, $F$ can be chosen in such a way that $\iota^\ast \mathrm d^LF=0, \iota^\ast \mathrm d^RF=0$ and $\iota^\ast F=0$.
\end{theorem}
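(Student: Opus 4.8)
The plan is to prove the two implications separately: the direct one is immediate from the bi-complex identities and the preceding Proposition, whereas the converse is a two-step descent along the bi-complex that reuses the SMAT theorem. For the direct implication, assume $\varpi=\mathrm d^L\mathrm d^R F$. Being a contrast bi-form, $\varpi$ already has $g^\varpi=\iota^\ast\varpi$ pseudo-Riemannian, so it suffices to check that both connections are torsion-free. I would compute $\mathrm d^L\varpi=\mathrm d^L\mathrm d^L\mathrm d^R F=0$ from $\mathrm d^L\circ\mathrm d^L=0$, and $\mathrm d^R\varpi=\mathrm d^R\mathrm d^L\mathrm d^R F=\mathrm d^L\mathrm d^R\mathrm d^R F=0$ from commutativity of the bi-complex together with $\mathrm d^R\circ\mathrm d^R=0$. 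In particular $\iota^\ast\mathrm d^L\varpi=0$ and $\iota^\ast\mathrm d^R\varpi=0$, so by the Proposition both $\nabla^\varpi$ and $\nabla^{\varpi^\dag}$ are torsion-free, and $(M,g^\varpi,\nabla^\varpi)$ is a statistical manifold.

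For the converse I would first observe that a statistical manifold is in particular a SMAT and invoke the previous theorem to obtain $S\in\Omega^{0,1}(M\mid M)$ with $\iota^\ast S=0$ such that $\mathrm d^L S$ induces the same $g^\varpi$ and $\nabla^\varpi$ as $\varpi$. Since the dual connection is the $g^\varpi$-conjugate of $\nabla^\varpi$, it is determined by the pair $(g^\varpi,\nabla^\varpi)$; hence $\mathrm d^L S$ reproduces $\nabla^{\varpi^\dag}$ as well, and the statistical hypothesis yields $\iota^\ast\mathrm d^R(\mathrm d^L S)=0$, i.e. $\iota^\ast\mathrm d^L(\mathrm d^R S)=0$ by commutativity. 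The remaining task is to integrate $S$ in the right variable, producing $F\in\Omega^{0,0}(M\mid M)$ with $\mathrm d^L\mathrm d^R F=\mathrm d^L S$.

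The engine for this is a relative Poincaré lemma for $\mathrm d^R$ on a tubular neighborhood of $\Delta_M$, whose right fibers are contractible: there, a $\mathrm d^R$-closed form of positive right degree is $\mathrm d^R$-exact. I would therefore modify $S$ by a term whose left two-jet along $\Delta_M$ vanishes — so that $(g^\varpi,\nabla^\varpi)$, which depends only on that jet, is left unchanged and $\iota^\ast S=0$ is preserved — in order to arrange $\mathrm d^R S=0$, and then set $S=\mathrm d^R F$. The main obstacle is exactly this step: one must reconcile the two notions of equality in play, honest identity of forms (bi-complex exactness) against structural equivalence (agreement of left-jets along $\Delta_M$), and verify that the vanishing of $\iota^\ast\mathrm d^L\mathrm d^R S$ supplied by the statistical hypothesis is precisely what removes the cohomological obstruction within the permitted class of modifications.

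The normalization is then essentially automatic. Keeping $\iota^\ast S=0$ through the construction gives $\iota^\ast\mathrm d^R F=\iota^\ast S=0$; replacing $F$ by $F-\pi_L^\ast(\iota^\ast F)$ leaves both $\mathrm d^L\mathrm d^R F$ and $\iota^\ast\mathrm d^R F$ unchanged, since $\mathrm d^R\pi_L^\ast=0$, while enforcing $\iota^\ast F=0$; and since the diagonal chain rule gives $\mathrm d(\iota^\ast F)=\iota^\ast\mathrm d^L F+\iota^\ast\mathrm d^R F$, the two conditions $\iota^\ast F=0$ and $\iota^\ast\mathrm d^R F=0$ force $\iota^\ast\mathrm d^L F=0$, completing the argument.
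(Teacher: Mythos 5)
The paper itself omits the proof of this theorem (it is deferred to a forthcoming publication), so there is no official argument to compare against; I can only assess your proposal on its own terms. Your direct implication is correct and complete: $\mathrm d^L\varpi=(\mathrm d^L\circ\mathrm d^L)(\mathrm d^RF)=0$ and $\mathrm d^R\varpi=\mathrm d^L(\mathrm d^R\circ\mathrm d^R)F=0$ follow from the bi-complex identities, and the Proposition then gives torsion-freeness of both $\nabla^\varpi$ and $\nabla^{\varpi^\dag}$, which together with $g^\varpi$-conjugacy is exactly the Lauritzen (Codazzi) condition. The closing normalization argument is also sound: $\mathrm d^R\pi_L^\ast(\iota^\ast F)=0$ justifies the replacement $F\mapsto F-\pi_L^\ast(\iota^\ast F)$, and the diagonal chain rule $\mathrm d(\iota^\ast F)=\iota^\ast\mathrm d^LF+\iota^\ast\mathrm d^RF$ yields the third condition from the other two.

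The converse, however, has a genuine gap, and you have in effect flagged it yourself without closing it. The SMAT theorem hands you $S$ with $\iota^\ast\mathrm d^L\mathrm d^RS=0$, but this is a condition on a finite jet of $S$ along $\Delta_M$ only, whereas exhibiting $F$ with $\mathrm d^L\mathrm d^RF$ structurally equivalent to $\mathrm d^LS$ requires solving an equation on a whole neighbourhood of the diagonal. Your plan --- perturb $S$ by a structurally trivial $T$ so that $\mathrm d^R(S+T)=0$, then invoke a fiberwise Poincar\'e lemma --- does not escape this: any such $T$ satisfies $S+T=\mathrm d^RG$ for some $G$, so the existence of an admissible $T$ is \emph{equivalent} to the existence of $G$ whose left $2$-jet of $\mathrm d^RG$ along $\Delta_M$ matches that of $S$. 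The Poincar\'e lemma therefore buys nothing; the entire content of the converse is this jet-prescription problem, in which the two torsion-free conditions enter as precisely the symmetry constraints (in coordinates, symmetry of $\partial_{x^i}\partial_{x^k}\partial_{y^j}G\vert_{\Delta_M}$ in $i,k$ and of $\partial_{x^i}\partial_{y^j}\partial_{y^l}G\vert_{\Delta_M}$ in $j,l$) that make a local solution possible, after which one must glue with a partition of unity \`a la Matumoto and check that non-degeneracy of $\iota^\ast\varpi$ survives the gluing. You assert that the hypothesis "removes the cohomological obstruction" but never construct the solution, so the converse remains a plausible plan rather than a proof.
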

\begin{remark}
    Interpreting $S$ as a fiber-wise linear function $M \times TM \to \mathbb{R}$ gives back Henmi and Matsuzoe's definition of a pre-contrast function   \cite{ref_HM}. Likewise, the function $F$ recovers Eguchi's idea of contrast function \cite{ref_Egu}.
\end{remark}
\section{An example: teleparallel manifolds}
This section presents an easy yet significant class of Lauritzen manifolds, allowing us to explicitly approach the inverse problem in information geometry within the formalism of bi-forms.
\subsection{Teleparallel manifolds}
Consider a finite-dimensional parallelizable pseudo-Riemannian manifold $(M,g)$, and fix a global frame $\mathcal{B} = \{\alpha^j\}_{j=1}^{\dim M}$ of $T^\vee M$. It is well known there exists  a unique affine connection $\nabla$ on $M$ such that each $\alpha^j$ is $\nabla$-covariantly constant, and we say that $\nabla$ is teleparallel with respect to $\mathcal{B}$. The torsion of the connection  $\nabla$ is directly related to the closedness of elements in $\mathcal{B}$: the connection is torsion-free if and only if each $\alpha^j$ is closed. The $g$-dual connection $\nabla^\dag$ is characterized by the property that each $g$-gradient vector fields of $\mathcal{B}$ is $\nabla^\dag$-parallel. In general, both $\nabla$ and $\nabla^\dag$ may possess torsion, and such structures provide a natural class of Lauritzen manifolds \cite{ref_CdC,ref_ZK19}.
 
 We now study the inverse problem in information geometry for $(M,g,\nabla)$, i.e., we seek a contrast bi-form $\varpi$ such that:
\begin{equation} \label{Eq:integration}
\big(M,g^\varpi,\nabla^\varpi\big) = (M,g,\nabla)\,.
\end{equation}
Although  this problem makes sense for arbitrary Lauritzen manifolds, we will construct explicitly a solution in the teleparallel setting. To this end, we exploit the presence of a global frame on $TM$, namely the $g$-gradient vector field of elements of $\mathcal{B}$. For each $\alpha^j$, we denote by $Z_j$ the $g$-gradient of $\alpha^j$, i.e. the unique vector field on $M$ satisfying:
\begin{equation}
\mathbf i_{Z_j}g=\alpha^j\,.
\end{equation}
This allows us to express any $(1,1)$-bi-form on $M$ as:
\begin{equation}
    \varpi = \sum_{i,j=1}^{\dim M}\varpi_{ij} \, \pi_L^\ast(\beta^i) \otimes \pi_R^\ast(\beta^j)\,,
\end{equation}
where $\{\beta^i\}_{i=1}^{\dim M}$ is the dual co-frame of $\{Z_j\}_{j=1}^{\dim M}$, and $\varpi_{ij}\in \Omega_{\Delta_M}^{0,0}(M\mid M)$. In such a case, \eqref{Eq:integration} reads as the system of equations:
\begin{equation}
    \begin{cases}
        \iota^\ast\big(\mathcal L_{Z^L}\varpi_{ij}\big) = g\big(\nabla_Z Z_i, Z_j\big)\\
        \iota^\ast \varpi_{ij} = g_{ij}
    \end{cases} \qquad i,j\in \{1,\dots,\dim M\}
\end{equation}
where $Z$ is an arbitrary vector field on $M$ and $g_{ij}=g(Z_i,Z_j)$. Alternatively, one can show that the problem is equivalent to:
\begin{equation}
    \begin{cases}
        \iota^\ast\mathrm d^L\varpi_{ij}=\mathrm dg_{ij}\\
        \iota^\ast \varpi_{ij} = g_{ij}
    \end{cases} \qquad i,j\in \{1,\dots,\dim M\}\,,
\end{equation}
A straightforward solution is given by $\varpi_{ij} = \pi_L^\ast g_{ij}$, yielding:
\begin{equation}\label{Eq: contrast bi-form}
    \varpi = \sum_{j=1}^{\dim M}\pi_L^\ast(\alpha^j) \otimes \pi_R^\ast(\beta^j)\,,
\end{equation}
where we used the relation:
\begin{equation}
    \alpha^j=\sum_{i=1}^{\dim M}g_{ij}\,\beta^i\,.
\end{equation}
\subsection{The manifold of faithful quantum states}
An important example of teleparallel manifold is $\mathcal{S}(\mathcal{H})$, the manifold of faithful quantum states of a $d$-level quantum system described by a $d$-dimensional complex Hilbert space $\mathcal H$. A Riemannian metric is given by a quantum monotone metric tensor $g^f$ associated to an operator monotone function $f$ \cite{C-DC-DN-V-2022,ref_Pez} according to:
\begin{equation}
g^f(X,Y)(\rho) = \operatorname{Tr} \big( X_\rho \, K_\rho^f(Y_\rho) \big)\,,
\end{equation}
where $\rho\in\mathcal{S}(\mathcal{H})$, $X$ and $Y$ are vector fields on $\mathcal{S}(\mathcal{H})$, $X_{\rho}$ and $Y_{\rho}$ are identified with traceless Hermitian operators on $\mathcal{H}$, and $K^{f}_{\rho}$ is the superoperator $K^{f}_{\rho}\colon\mathcal{B}(\mathcal{H})\rightarrow\mathcal{B}(\mathcal{H})$ given by:
\begin{equation}
    K^{f}_{\rho}(a)=\left(f(L_{\rho}R_{\rho}^{-1})R_{\rho}\right)^{-1}(a)\,,
\end{equation}
with $L_{\rho}(a)=\rho a$, $R_{\rho}(a)=a\rho$, and $f$ an operator monotone function satisfying $tf(t^{-1})=f(t)$.

Finally, $\nabla$ is taken to be the connection associated with the natural convex structure of $\mathcal{S}(\mathcal{H})$ 
. Let $\{A_i\}_{i=0}^{d^2-1}$ be a basis of the real vector space $\mathcal B_{\operatorname{sa}}(\mathcal H)$ of Hermitian operators such that $A_0=\mathbb I$, $\operatorname{Tr}(A_i)=0$ and $\operatorname{Tr}(A_iA_j)=\delta_{ij}$ for each $i,j\in \{1,\dots,d^2-1\}$. 
The connection $\nabla$ is teleparallel with respect to $\{\mathrm d e_{A_i}\}_{i=1}^{d^2-1}$, where:
\begin{equation}
 e_{A_i}(\rho) = \operatorname{Tr}(A_i \rho)\,.
\end{equation}
Although $\nabla$ is torsion-free, the torsion of $\nabla^\dag$ depends on the choice of the quantum monotone function $f$; the torsion vanishes if $f$ corresponds to the Bogoliubov-Kubo-Mori metric.

The solution \eqref{Eq: contrast bi-form} in this example reads:
\begin{equation}
    \varpi=\sum_{i=1}^{d^2-1}\pi_L^\ast (\mathrm d  e_{A_i})\otimes \pi_R^\ast( \beta^i)\,,
\end{equation}
where $\{ \beta^i\}_{i=1}^{d^2-1}$ denotes the dual co-frame of $  g^f$-gradient vector fields of the family $\{  e_{A_i}\}_{i=1}^{d^2-1}$ \cite{ref_CdC}.

Since $\nabla$ is torsion-free, the potential generating $(\mathcal{S}(\mathcal{H}),g^f,\nabla)$ can be found in terms of a $(0,1)$-bi-form $S$:
\begin{equation}
    S=\sum_{i=1}^{d^2-1}(\pi_L^\ast-\pi_R^\ast)(e_{A_i})\,\pi_R^\ast(\beta^i)\,.
\end{equation}
If $f$ is the Bogoliubov-Kubo-Mori quantum monotone function, then $(M,g^f,\nabla)$ is a statistical manifold, and so the potential can be found in terms of a contrast function on $M$, the \textit{von Neumann-Umegaki relative entropy}, denoted by $\operatorname{vNU}$:
\begin{equation}
    \operatorname{vNU}(\rho,\sigma)=\operatorname{Tr}(\rho\,\log \rho-\rho\,\log \sigma)\,.
\end{equation}

This example illustrates how the formalism of bi-forms provides a powerful tool for analysing geometric structures in quantum information geometry.

\begin{credits}
\subsubsection{\ackname} This work was supported by the FRA 2022 project GALAQ (Geometric and Algebraic Aspects of Quantization) of the University of Naples Federico II. Further support was provided by the Madrid Government under the Multiannual Agreement with UC3M in the framework of “Research Funds for Beatriz Galindo Fellowships” (C\&QIG-BG-CM-UC3M), within the V PRICIT programme (Regional Programme of Research and Technological Innovation), and through the project TEC-2024/COM-84 QUITEMAD-CM. Support from the COST Action CaLISTA CA21109 (European Cooperation in Science and Technology), the PRIN 2022 project 2022XZSAFN (CUP: E53D23005970006) funded by the Next Generation EU programme, and GNSAGA (INdAM) is also acknowledged.

\end{credits}
%
%
%

\begin{thebibliography}{8}

\bibitem{ref_AA}
Ay, N., Amari, S.: A novel approach to canonical divergences within information geometry. Entropy \textbf{17}(2), 811--842 (2015).

\bibitem{ref_CdC}
Ciaglia, F.M., Di Cosmo, F., Ibort, A., Marmo, G.: G-dual teleparallel connections in information geometry. Information Geometry \textbf{7}(4), 587--608 (2023).

\bibitem{C-DC-DN-V-2022}
Ciaglia, F.M., Di Cosmo, F., Di Nocera, F., Vitale, P.: Monotone metric tensors in quantum information geometry. International Journal of Geometric Methods in Modern Physics \textbf{21}(10), 2440004 (2024).

\bibitem{ref_Egu}
Eguchi, S.: Geometry of minimum contrast. Hiroshima Math. J. \textbf{22}(3), 631--647 (1992).

\bibitem{ref_EB_I}
Einstein, A., Bargmann, V.: Bivector fields. Ann. Math. \textbf{45}(1), 1--14 (1944).

\bibitem{ref_EB_II}
Einstein, A.: Bivector fields II. Ann. Math. \textbf{45}(1), 15--23 (1944).

\bibitem{ref_HM}
Henmi, M., Matsuzoe, H.: Statistical manifolds admitting torsion and partially flat spaces. In: Nielsen, F. (ed.) Geometric Structures of Information, Proceedings of the Geometric Science of Information Conference, pp. 37--50. Springer, Cham (2019).

\bibitem{ref_HM_2017}
Henmi, M., Matsuzoe, H.: Statistical manifolds admitting torsion, pre-contrast functions and estimating functions. In: Nielsen, F., Barbaresco, F. (eds.) Information Geometry and Its Applications, pp. 301--315. Springer, Cham (2017).

\bibitem{ref_J}
Jenčová, A.: Geometry of quantum states: dual connections and divergence functions. Reports on Mathematical Physics \textbf{47}(1), 121--138 (2001).

\bibitem{ref_Kurose}
Kurose, T.: Statistical manifolds admitting torsion. Fukuoka Univ., Geometry and Something (2007) (in Japanese).

\bibitem{ref_Lau}
Lauritzen, S.L.: Statistical manifolds. In: Amari, S. (ed.) Differential geometry in statistical inferences, IMS Lecture Notes Monograph Series \textbf{10}, pp. 163--216. Institute of Mathematical Statistics, Hayward (1987).

\bibitem{ref_Matu}
Matumoto, T.: Any statistical manifold has a contrast function—On the $C^3$-functions taking the minimum at the diagonal of the product manifold. Hiroshima Mathematical Journal \textbf{23}(2), 327--332 (1993).

\bibitem{ref_NSS}
Nickerson, H.K., Spencer, D.C., Steenrod, N.E.: Advanced Calculus. Van Nostrand, New York (1959).

\bibitem{ref_Pez}
Petz, D.: Monotone metrics on matrix spaces. Linear Algebra and its Applications \textbf{244}, 81--96 (1996).

\bibitem{ref_Ruse}
Ruse, H.S.: Absolute partial differential calculus. Proc. Lond. Math. Soc. \textbf{2}(1), 194--215 (1932).

\bibitem{ref_ZK}
Zhang, J., Khan, G.: Statistical mirror symmetry. Differential Geometry and its Applications \textbf{73}, 587--608 (2020).

\bibitem{ref_ZK19}
Zhang, J., Khan, G.: From Hessian to Weitzenb{\"o}ck: manifolds with torsion-carrying connections. Information Geometry \textbf{2}(1), 77--89 (2019).

\end{thebibliography}
%

\end{document}